\newtheorem{thm}{Theorem}[section]
\newtheorem{cor}[thm]{Corollary}
\newtheorem{lem}[thm]{Lemma}
\newtheorem{prop}[thm]{Proposition}
\newtheorem{defn}[thm]{Definition}
\newtheorem{rem}[thm]{Remark}
\newtheorem{rems}[thm]{Remarks}
\def\ZZ{\mathbb Z}
\begin{document}

\begin{frontmatter}



\title{The  continued fractions ladder of specific pairs  of irrationals}
\author[ml]{M. Lakner}
\ead{mlakner@fgg.uni-lj.si}
\author[pp]{P. Petek}
\ead{peter.petek@guest.arnes.si}
\author[ml]{M. \v{S}kapin Rugelj \corref{msr}}
\ead{mskapin@fgg.uni-lj.si}
\address[ml]{University of Ljubljana, Faculty of Civil and Geodetic Engineering, Jamova 2, 1000 Ljubljana, Slovenia}
\address[pp]{University of Ljubljana, Faculty of Education,
Kardeljeva plo\v{s}\v{c}ad 16, 1000 Ljubljana, Slovenia}
\cortext[msr]{corresponding author, +386 1 476 85 56,
Fax +386 1 425 06 81
}


\begin{abstract}
Quadratic irrationals posses a periodic continued fraction expansion. Much
less is known about cubic irrationals.
We do not even know if the partial
quotients are bounded, even though extensive computations suggest they might
follow Kuzmin's probability law.
Results are given for sequences of
partial quotients of positive irrational numbers
$\xi$ and $m \over \xi$ with $m$ a natural number. A big partial quotient in one
sequence finds a connection in the other.
\end{abstract}

\begin{keyword}
continued fraction, irrational number

\MSC[2010] 11A55
\end{keyword}

\end{frontmatter}

\section{Introduction}

Several authors have considered simultaneous rational approximations
to pairs of irrationals
$(\alpha, \beta)$  \cite{Be, Ch, AJP, WWD} and new generalized concepts were
developed \cite{BrP}. Here however we observe a different
parallelism for the specific pair $(\xi,\frac{m}{\xi})$ and the usual continued
fraction expansion. Similar problem with the different approach is discussed
in \cite{AJP}.
Our long term goal is the proof of the\\[2mm]
 \noindent{\bf Hypothesis}. The partial quotients in the continued fraction
expansion of $\sqrt[3]{2}$ are unbounded.\\

Already in \cite{H} this question is asked for algebraic numbers of degrees
higher than 2.

A starting point for our present observation could be the long tables of partial
quotients for $(\sqrt[3]{2},\sqrt[3]{4})$ in \cite{LT},
where  the bigger ones are singled out.
In Figure \ref{fig1} we notice parallel apparition of big partial quotients in both the
continued fraction expansion sequences.

\begin{figure}[!htbp]
\centering
\includegraphics[width=130mm]{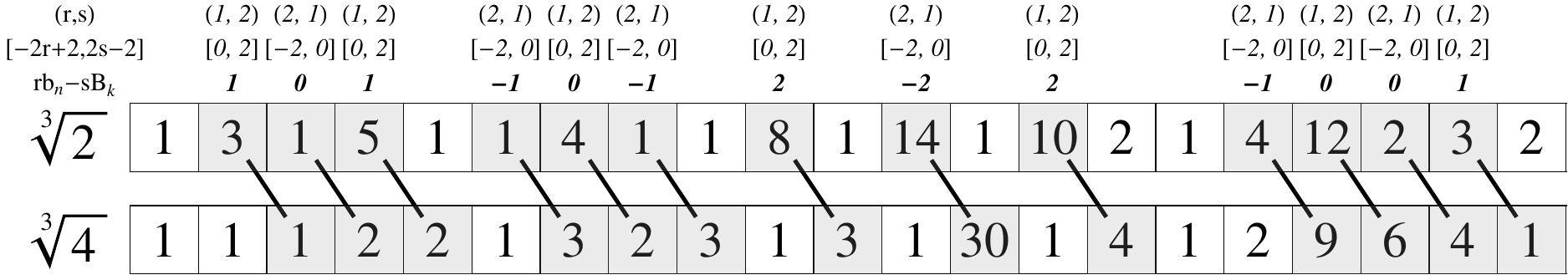}
\caption{Continued fraction ladder of  $(\sqrt[3]{2},\sqrt[3]{4})$}\label{fig1}
\end{figure}

A relatively big partial quotient in one sequence is connected to
roughly half that quotient in the other sequence. These relations
are formalized and analyzed in the sequel. "Big" here can be as small as $2$.

\begin{rems}
Numerous numerical experiments have been carried out to support
the Kuzmin statistics, giving the probability of a certain partial quotient
at $P(b_n=k)=\log_2{(k+1)^2\over k(k+2)}$ \cite{RDM, LT, Bru}.

It is also of interest that cubic irrationals appear in physics in studying
chaotic and quasiperiodic motions \cite{MH, CSG}.
\end{rems}

Let $\xi$ be any real number and we start the continued fraction process by
saying $\xi_0=\xi$, taking integer part $b_0=\lfloor \xi\rfloor$ and setting
$$\xi_1={1\over \xi_0-b_0},\;\; b_1=\lfloor\xi_1\rfloor$$
only to continue as long as
we can in the same fashion
\begin{equation}\label{ksi}
\xi_n={1\over \xi_{n-1}-b_{n-1}},\;\; b_n=\lfloor\xi_n\rfloor .
\end{equation}
The process eventually stops for a rational $\xi={a\over b}$
and continues indefinitely for irrational one.

We reap the approximations, convergents ${p_n\over q_n}$ to $\xi$, defined
by
$$p_{-1}=1,\; q_{-1}=0,\; p_0=b_0,\;q_0=1$$
$$p_n= b_np_{n-1}+p_{n-2},\;\; q_n=b_nq_{n-1}+q_{n-2}$$

We shall need some elementary results from \cite{P} or \cite {H}
and we quote them here.

\begin{itemize}
\item Let us have an irrational $\xi=\xi_0$ and the following complete
quotients are denoted by $\xi_n$, the corresponding convergents by
${p_n\over q_n}$, then we can express:
\begin{equation}\label{copml_quot}
\xi_n=-{p_{n-2}-q_{n-2}\xi \over p_{n-1}-q_{n-1}\xi }.
\end{equation}
\item Even convergents are smaller than the irrational and odd ones are
greater
\begin{equation}\label{sod_lih}
{p_{2j}\over q_{2j}}<\xi<{p_{2j+1}\over q_{2j+1}}
\end{equation}
 and
\begin{equation}\label{pqpq}
p_n q_{n-1}-p_{n-1}q_n=(-1)^{n-1}
\end{equation}
yielding the fact, that the
convergents are in their lowest terms.
\item ${p_n\over q_n}=\xi+(-1)^{n-1}{|\delta|\over q_n^2}$ with $|\delta|<1$
and if $b=b_{n+1}$ be the next partial quotient there is an
estimate ${1\over b+2}<|\delta|<{1\over b}$.
\item For any rational ${p\over q}$ there is a sufficient condition $|\delta|<{1\over 2}$
to be one of the convergents where $\delta=(\frac{p}{q}-\xi)q^2$.
\item Let $\frac{p_n}{q_n}$ and $\frac{p_N}{q_N}$ be two convergents of $\xi$. Then the following
statements are equivalent
\begin{itemize}
\item[$\ast$] $n < N$
\item[$\ast$] $p_n < p_N$
\item[$\ast$] $q_n < Q_N$
\item[$\ast$] $|\frac{p_n}{q_n}-\xi|>|\frac{p_N}{q_N}-\xi|$
\end{itemize}
The last inequality is a consequence of (\ref{copml_quot}).
\item The sequence of "relative errors" $|1- \frac{\xi}{p_n/q_n}|$ is decreasing.
\begin{eqnarray*}
1< |\xi_n|\stackrel{(\ref{copml_quot})}{=} \left|-{p_{n-2}-q_{n-2}\xi \over p_{n-1}-q_{n-1}\xi } \right|
& = & \frac{ \left|1- \frac{\xi}{p_{n-2}/q_{n-2}}\right| p_{n-2}}{ \left|1- \frac{\xi}{p_{n-1}/q_{n-1}}\right| p_{n-1}}\\
& < & \frac{ \left|1- \frac{\xi}{p_{n-2}/q_{n-2}}\right|}{ \left|1- \frac{\xi}{p_{n-1}/q_{n-1}}\right|}
\end{eqnarray*}

\end{itemize}
Here all fractions are in their lowest terms.

\section{Relations in the sequences of convergents}

First we discuss connections of partical quotients for a pair of positive irrational numbers $(\xi,\eta)$, where
$\xi \cdot \eta =m$
with  $m$  a natural number.
The triplets
$({p_{n-1}\over q_{n-1}}, \xi_n, b_n)$ and $({P_{k-1}\over
Q_{k-1}},\eta_k,  B_k)$ denotes convergents, complete quotients and partial quotients
respectively.

\begin{defn}\label{connected} We say that the triplets
$({p_{n-1}\over q_{n-1}}, \xi_n, b_n)$ and $({P_{k-1}\over
Q_{k-1}},\eta_k,  B_k)$ are \emph{connected}, if
$$
{p_{n-1}\over q_{n-1}} \cdot {P_{k-1}\over Q_{k-1}}=m.
$$
We call the triplets together with connections \emph{ladder} of $(\xi,\eta)$ (see Figure \ref{fig1} and \ref{fig2}).

\end{defn}

We will need the following identities
\begin{lem}\label{zveza}
\begin{eqnarray*}
 m \frac{q}{p}-\eta &=& - \frac{q}{p} \eta \left(  \frac{p}{q}-\xi \right) \\
\left( m \frac{q}{p}-\eta \right) p^2 &=& - \frac{p}{q} \eta \left(  \frac{p}{q}-\xi \right) q^2
\end{eqnarray*}
\end{lem}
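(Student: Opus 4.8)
The plan is to reduce both identities to direct algebraic manipulation, using only the defining relation $\xi\eta=m$ together with the standing assumption that $\xi,\eta$ are positive (so $p/q\neq 0$ and all displayed fractions are legitimate).

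First I would prove the first line by expanding the right-hand side:
$$
-\frac{q}{p}\,\eta\left(\frac{p}{q}-\xi\right)=-\eta+\frac{q}{p}\,\xi\eta .
$$
Substituting $\xi\eta=m$ turns this into $-\eta+m\frac{q}{p}=m\frac{q}{p}-\eta$, which is the left-hand side. That is the entire content of the first identity.

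For the second line I would simply multiply the first identity through by $p^{2}$. The left-hand side becomes $\left(m\frac{q}{p}-\eta\right)p^{2}$ verbatim, while the right-hand side becomes $-\frac{q}{p}\,\eta\left(\frac{p}{q}-\xi\right)p^{2}$; since $\frac{q}{p}\,p^{2}=pq=\frac{p}{q}\,q^{2}$, this equals $-\frac{p}{q}\,\eta\left(\frac{p}{q}-\xi\right)q^{2}$, as claimed. Equivalently, one observes that both sides of the second line are nothing but $-pq\,\eta\left(\frac{p}{q}-\xi\right)$.

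There is no real obstacle here: the lemma is a bookkeeping identity expressing the error $m\frac{q}{p}-\eta$ of the reciprocally scaled fraction $\frac{q}{p}$ in terms of the error $\frac{p}{q}-\xi$ of $\frac{p}{q}$, and its $p^{2}$-weighted form is designed to mesh with the normalization $\delta=\left(\frac{p}{q}-\xi\right)q^{2}$ used in Section 1. The only point needing the slightest attention is tracking which power of $p$ or $q$ multiplies which fraction, i.e. the elementary identity $\frac{q}{p}\,p^{2}=\frac{p}{q}\,q^{2}$; everything else is immediate from $\xi\eta=m$.
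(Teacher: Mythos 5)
Your proof is correct: both identities follow exactly as you compute, from $\xi\eta=m$ and the observation $\frac{q}{p}\,p^{2}=pq=\frac{p}{q}\,q^{2}$. The paper states this lemma without any proof, treating it as immediate algebra, and your direct verification is precisely the intended argument.
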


\begin{prop}
Two different connections do not intersect.
\end{prop}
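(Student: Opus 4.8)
In the ladder picture each connection is a rung joining a convergent of $\xi$ to a convergent of $\eta$, so the statement ``two different connections do not intersect'' means that this matching of convergents is order-preserving: if one rung joins the $n_1$-th convergent of $\xi$ to the $k_1$-th convergent of $\eta$, and another joins the $n_2$-th to the $k_2$-th, then $n_1<n_2$ must force $k_1<k_2$. The plan is to prove exactly this monotonicity. The engine will be Lemma~\ref{zveza}, which lets me transfer the (strict) monotonicity of the \emph{relative} errors $\bigl|1-\xi/(p_n/q_n)\bigr|$ on the $\xi$-side — one of the elementary facts quoted above — into monotonicity of the \emph{absolute} errors $\bigl|P_k/Q_k-\eta\bigr|$ on the $\eta$-side; the quoted equivalence between error size and index, applied to $\eta$, then closes the argument.

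First I would fix notation: suppose the triplets indexed by $(n_1,k_1)$ and $(n_2,k_2)$ are both connected, i.e.\ $\frac{p_{n_i-1}}{q_{n_i-1}}\cdot\frac{P_{k_i-1}}{Q_{k_i-1}}=m$ for $i=1,2$. Since $\gcd(p_{n-1},q_{n-1})=1$, the factor $\frac{p_{n-1}}{q_{n-1}}$ determines $\frac{P_{k-1}}{Q_{k-1}}=\frac{m\,q_{n-1}}{p_{n-1}}$, and conversely a connection is determined by either of its endpoints; hence two \emph{different} connections must have $n_1\neq n_2$ (and $k_1\neq k_2$). Assuming without loss of generality that $n_1<n_2$, it suffices to prove $k_1<k_2$.

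The core is a one-line computation. Using the connection relation $\frac{P_{k_i-1}}{Q_{k_i-1}}=m\frac{q_{n_i-1}}{p_{n_i-1}}$ together with the first identity of Lemma~\ref{zveza} (with $p=p_{n_i-1}$, $q=q_{n_i-1}$),
\[
\left|\frac{P_{k_i-1}}{Q_{k_i-1}}-\eta\right|
=\frac{q_{n_i-1}}{p_{n_i-1}}\,\eta\left|\frac{p_{n_i-1}}{q_{n_i-1}}-\xi\right|
=\eta\left|1-\frac{\xi}{p_{n_i-1}/q_{n_i-1}}\right|,
\]
so the absolute error of the $\eta$-convergent is exactly $\eta$ times the relative error of the corresponding $\xi$-convergent. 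Because the sequence of relative errors $\bigl|1-\xi/(p_n/q_n)\bigr|$ is strictly decreasing and $n_1-1<n_2-1$, this yields
\[
\left|\frac{P_{k_1-1}}{Q_{k_1-1}}-\eta\right|>\left|\frac{P_{k_2-1}}{Q_{k_2-1}}-\eta\right|.
\]
Applying to $\eta$ the quoted equivalence ``smaller index $\iff$ larger distance to the number'' gives $k_1-1<k_2-1$, i.e.\ $k_1<k_2$, which is what was wanted.

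I expect the only delicate points to be bookkeeping rather than substance. One is the strictness of the relative-error monotonicity on the exact range of indices in play; it follows from $|\xi_n|>1$ and $p_{n-2}<p_{n-1}$, both strict once the partial quotients are genuine. The other is excluding degenerate ``endpoints'' of the continued fraction, such as $p_{-1}/q_{-1}=1/0$ or $p_0/q_0=0$ in the case $0<\xi<1$: these can never occur in a connection, since their product with anything positive is never a positive integer $m$, which also keeps the relative errors above well-defined. Beyond these checks the proof is just the displayed identity plus the quoted facts; the conceptual content is precisely the observation that Lemma~\ref{zveza} turns the relative error on the $\xi$-side into the absolute error on the $\eta$-side.
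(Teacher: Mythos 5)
Your proof is correct and follows essentially the same route as the paper: the first identity of Lemma~\ref{zveza} converts the absolute error of the $\eta$-convergent $m\frac{q}{p}$ into $\eta$ times the relative error of the $\xi$-convergent $\frac{p}{q}$, and the strict decrease of relative errors then preserves the ordering, so the connections cannot cross. You merely make explicit the final step from error ordering to index ordering (via the quoted equivalence for $\eta$) that the paper leaves implicit.
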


\begin{proof}
Let take two connections with convergents $\frac{p}{q}$, $\frac{p'}{q'}$ to $\xi$ such that  $|\frac{p}{q}-\xi|>|\frac{p'}{q'}-\xi|$.
We want to prove that $|m\frac{q}{p}-\eta|>|m\frac{q'}{p'}-\eta|$.

Using Lemma \ref{zveza} and decreasing property of relative errors we get
$$\frac{\left| m \frac{q}{p}-\eta \right|}{\left| m \frac{q'}{p'}-\eta \right|}
=
\frac{\frac{q}{p} \eta \left|  \frac{p}{q}-\xi \right| }
     {\frac{q'}{p'} \eta \left|  \frac{p'}{q'}-\xi \right| }
= \frac{\left| 1- \frac{\xi}{p/q} \right|}{\left| 1- \frac{\xi}{p'/q'} \right|}
>1.
$$
\end{proof}

\begin{prop}
Let $\frac{p}{q}$ be convergent to $\xi$ with partial
quotient $b\ge 2m+1$. If $\eta < m$, then we have the connection to $m \frac{q}{p}$.
\end{prop}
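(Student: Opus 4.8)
The plan is to show that the rational $m\,q/p$ is a convergent of $\eta$; once that is established, the connection is immediate from Definition~\ref{connected}, since $\frac{p}{q}\cdot\frac{mq}{p}=m$. To recognize $mq/p$ as a convergent I will use the quoted sufficient condition: a rational $P/Q$ in lowest terms is a convergent of $\eta$ as soon as $\bigl|(P/Q-\eta)Q^{2}\bigr|<\tfrac12$. So it suffices to prove that the quantity $\delta':=\bigl(m\tfrac{q}{p}-\eta\bigr)p^{2}$ satisfies $|\delta'|<\tfrac12$ (the convergent $p/q$ is automatically in lowest terms by (\ref{pqpq})).

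First I would rewrite $\delta'$ via the second identity of Lemma~\ref{zveza}, which gives $\delta'=-\tfrac{p}{q}\,\eta\bigl(\tfrac{p}{q}-\xi\bigr)q^{2}$, hence $|\delta'|=\tfrac{p}{q}\,\eta\,|\delta|$, where $\delta=\bigl(\tfrac{p}{q}-\xi\bigr)q^{2}$ is the error parameter of $p/q$. Since $b\ge 2m+1$ is the partial quotient following $p/q$, the quoted estimate $|\delta|<1/b$ gives $|\delta|<\tfrac{1}{2m+1}$. Next I would control the factor $\tfrac{p}{q}\eta$: writing $\eta=m/\xi$ and $p/q=p_n/q_n$ and using ${p_n\over q_n}=\xi+(-1)^{n-1}{|\delta|\over q_n^{2}}$ yields $\tfrac{p}{q}\eta=m\cdot\tfrac{p/q}{\xi}=m\Bigl(1+\tfrac{(-1)^{n-1}|\delta|}{\xi q^{2}}\Bigr)\le m\Bigl(1+\tfrac{|\delta|}{\xi q^{2}}\Bigr)$.

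Finally I would bring in the hypothesis $\eta<m$, which is equivalent to $\xi>1$. Together with $q\ge 1$ and $|\delta|<\tfrac{1}{2m+1}$ this gives
$$|\delta'|\le m|\delta|+\frac{m|\delta|^{2}}{\xi q^{2}}<\frac{m}{2m+1}+\frac{m}{(2m+1)^{2}}=\frac{2m(m+1)}{(2m+1)^{2}}<\frac12,$$
the last inequality being just $4m(m+1)<(2m+1)^{2}$. Hence $mq/p$ is a convergent of $\eta$ and the connection holds. (If $\gcd(m,p)>1$, so that $mq/p$ is not yet written in lowest terms, passing to the reduced fraction only divides $\delta'$ by a perfect square, so the bound still applies.)

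The main point to watch is that the constant is tight: the two assumptions $b\ge 2m+1$ and $\eta<m$ are exactly what make the ``intrinsic'' contribution $m|\delta|$ and the ``relative-error'' correction $m|\delta|^{2}/(\xi q^{2})$ add up to something just below $\tfrac12$, so there is no slack and one cannot afford a cruder estimate (e.g. $\tfrac{p}{q}\eta<m$, which is valid only for even convergents). A secondary care point is the bookkeeping convention: the ``partial quotient $b$ of the convergent $p/q$'' must be read as the next partial quotient $b_{n+1}$, the one governing $|\delta|$ through $\tfrac{1}{b+2}<|\delta|<\tfrac1b$.
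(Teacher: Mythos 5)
Your proof is correct and follows essentially the same route as the paper: both use Lemma~\ref{zveza} to write $\bigl|(m\tfrac{q}{p}-\eta)p^2\bigr|=\tfrac{p}{q}\eta\,|\delta|$, bound $|\delta|<1/b\le 1/(2m+1)$, invoke the $|\delta'|<\tfrac12$ (Legendre) criterion, and handle the reduction of $m\tfrac{q}{p}$ to lowest terms. The only difference is cosmetic bookkeeping of the correction term: you use $\eta<m\Leftrightarrow\xi>1$ to get $\tfrac{2m(m+1)}{(2m+1)^2}<\tfrac12$, while the paper bounds $\tfrac{p}{q}\le\xi+\tfrac{1}{bq^2}$ and $\eta<m$ to land exactly at $\tfrac12$.
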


\begin{proof}
Let $\frac{p}{q}$ be convergent to $\xi$.
We have to see that $m \frac{q}{p}$ is convergent to
$\eta$. Let $\frac{q_1}{p_1}$ be $m \frac{q}{p}$ in its
lowest terms.

\begin{eqnarray*}
\left|\left(\frac{q_1}{p_1}-\eta\right)p_1^2\right|&\le&
\left|\left(m\frac{q}{p}-\eta\right)p^2\right|=
\frac{p}{q} \eta \left|\frac{p}{q}-\xi
\right|q^2\\
&<& \left(\xi+\frac{1}{b q^2}\right)
\eta\frac{1}{b}< \left(m +\frac{m}{2m
1^2}\right)\frac{1}{2m+1}=\frac{1}{2}
\end{eqnarray*}
We have used Lemma \ref{zveza} and the fact that $\left|
\frac{p}{q}-\xi \right| < \frac{1}{b q^2}$.

\end{proof}

\begin{lem}\label{parity}
If the triplets
$({p_{n-1}\over q_{n-1}}, \xi_n, b_n)$ and $({P_{k-1}\over
Q_{k-1}},\eta_k,  B_k)$ are connected, then $n$ and $k$ are of different parity: $(-1)^{k-1}=(-1)^n$.
\end{lem}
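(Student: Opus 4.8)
The plan is to exploit the sign information carried by the convergents via inequality (\ref{sod_lih}), translated through the connection relation using Lemma \ref{zveza}. Suppose the triplets are connected, so $\frac{p_{n-1}}{q_{n-1}}\cdot\frac{P_{k-1}}{Q_{k-1}}=m$, i.e. $\frac{P_{k-1}}{Q_{k-1}}=m\frac{q_{n-1}}{p_{n-1}}$. I would first record from (\ref{sod_lih}) that the sign of $\frac{p_{n-1}}{q_{n-1}}-\xi$ is $(-1)^{n-1}$ — negative when $n-1$ is even, positive when $n-1$ is odd — and likewise the sign of $\frac{P_{k-1}}{Q_{k-1}}-\eta$ is $(-1)^{k-1}$. (Here one uses that $\frac{p_{n-1}}{q_{n-1}}$ is genuinely the $(n-1)$-st convergent, which is part of the hypothesis that these triplets come from the continued fraction process.)

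Next I would invoke the first identity of Lemma \ref{zveza} with $\frac{p}{q}=\frac{p_{n-1}}{q_{n-1}}$:
\[
\frac{P_{k-1}}{Q_{k-1}}-\eta \;=\; m\frac{q_{n-1}}{p_{n-1}}-\eta \;=\; -\frac{q_{n-1}}{p_{n-1}}\,\eta\left(\frac{p_{n-1}}{q_{n-1}}-\xi\right).
\]
Since $\xi,\eta>0$ and $\frac{p_{n-1}}{q_{n-1}}>0$ (true once $n-1\ge 0$ and, for $n=1$, because $\xi>0$ forces $b_0\ge 0$; the degenerate case $n-1$ giving $p_{n-1}=0$ cannot be connected since then the product is $0\ne m$), the prefactor $-\frac{q_{n-1}}{p_{n-1}}\eta$ is strictly negative. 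Hence $\frac{P_{k-1}}{Q_{k-1}}-\eta$ and $\frac{p_{n-1}}{q_{n-1}}-\xi$ have opposite signs. Combining with the sign computations from (\ref{sod_lih}) gives $(-1)^{k-1}=-(-1)^{n-1}=(-1)^n$, which is exactly the claim.

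I expect the only real subtlety — not so much an obstacle as a bookkeeping point — to be handling the boundary indices: one must make sure $\frac{p_{n-1}}{q_{n-1}}$ and $\frac{P_{k-1}}{Q_{k-1}}$ are strictly positive so that the sign of the prefactor in Lemma \ref{zveza} is unambiguous, and one should note that (\ref{sod_lih}) as quoted covers $j\ge 0$ so the smallest indices ($n-1=0$, i.e. the convergent $\frac{b_0}{1}$) are included, with $\frac{b_0}{1}\le\xi$ matching $(-1)^{-1}=-1$. With those cases checked, the sign argument is immediate and the lemma follows. $\qed$
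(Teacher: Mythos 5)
Your argument is correct and is essentially the paper's own proof: the paper simply divides the connection relation ${p_{n-1}\over q_{n-1}}\,{P_{k-1}\over Q_{k-1}}=\xi\eta$ by the convergent and applies (\ref{sod_lih}), which is exactly the sign flip you obtain through Lemma \ref{zveza}. One cosmetic slip: by (\ref{sod_lih}) the sign of ${p_{n-1}\over q_{n-1}}-\xi$ is $(-1)^{n}$ rather than $(-1)^{n-1}$ (it is negative for $n-1$ even, as your verbal gloss correctly says), but since you use the same shifted exponent for both sequences the final parity conclusion $(-1)^{k-1}=(-1)^{n}$ is unaffected.
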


\begin{proof}
It is enough to prove for $n$ is odd. From (\ref{sod_lih})
it follows that
$${p_{n-1}\over q_{n-1}}{P_{k-1}\over Q_{k-1}}=\xi \, \eta >  {p_{n-1}\over q_{n-1}} \eta.$$
So we have $\eta < {P_{k-1}\over Q_{k-1}}$ and $k$ is even.

\end{proof}

\begin{thm} \label{izrek}
Let  $({p_{n-1}\over q_{n-1}}, \xi_n, b_n)$
and $({P_{k-1}\over Q_{k-1}}, \eta_k, B_k)$ be connected. Then
there exist natural numbers $r$, $s$, such that $rs=m$ and
\begin{equation}\label{glavno} -2r+2\le rb_n-sB_k\le 2s-2.
\end{equation}
\end{thm}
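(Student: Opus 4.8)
The plan is to track how the connection relation $\frac{p_{n-1}}{q_{n-1}}\cdot\frac{P_{k-1}}{Q_{k-1}}=m$ interacts with the recurrence for convergents. Since $\frac{p_{n-1}}{q_{n-1}}$ is in lowest terms and its product with $\frac{P_{k-1}}{Q_{k-1}}$ (also in lowest terms) equals the integer $m$, we must have $p_{n-1}\mid mQ_{k-1}$ and in fact a factorization of $m$ is forced: writing $d=\gcd(p_{n-1},Q_{k-1})$, there are natural numbers $r,s$ with $rs=m$, $p_{n-1}=rQ_{k-1}/?$ — more precisely, from $p_{n-1}Q_{k-1}=mq_{n-1}P_{k-1}$ one extracts $r,s$ with $rs=m$ and, say, $P_{k-1}=s\,q_{n-1}$, $q_{n-1}\mid\ldots$; the cleanest route is to set $r=\gcd(p_{n-1},m\,\text{-part})$ so that $p_{n-1}=r\,Q_{k-1}$ and $P_{k-1}=s\,q_{n-1}$ with $rs=m$. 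I would first pin down this algebraic fact carefully: the connection forces $p_{n-1}=rQ_{k-1}$ and $P_{k-1}=sq_{n-1}$ for a factorization $rs=m$.

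Next I would bring in the \emph{previous} pair of convergents. By Proposition on non-intersecting connections together with Lemma \ref{parity}, the convergents $\frac{p_{n-2}}{q_{n-2}}$ and $\frac{P_{k-2}}{Q_{k-2}}$ (one step back on each side, so the parities flip appropriately) are again connected, giving another factorization $r's'=m$ with $p_{n-2}=r'Q_{k-2}$, $P_{k-2}=s'q_{n-2}$. Actually I expect $r'=r$ and $s'=s$ stay the same along a stretch of the ladder — this is the key stabilization claim. Then I would feed the recurrences $p_{n-1}=b_{n-1}p_{n-2}+p_{n-3}$ etc. against $rb_n-sB_k$. The precise identity I want comes from combining $p_n=b_np_{n-1}+p_{n-2}$, $P_k=B_kP_{k-1}+P_{k-2}$, and the new connection at level $n+1,k+1$ (which gives $p_n=\tilde rQ_k$, $P_k=\tilde sq_n$). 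Substituting the substitutions $p_{n-1}=rQ_{k-1}$, $p_{n-2}=rQ_{k-2}$ (assuming stabilization) turns $p_n=rb_nQ_{k-1}+rQ_{k-2}$ into a relation with $Q_k=B_kQ_{k-1}+Q_{k-2}$, yielding $r(b_n-B_k)=$ a bounded correction term, or more symmetrically $rb_n-sB_k$ equals a small integer combination of the $q,Q$'s divided by $Q_{k-1}$ or $q_{n-1}$.

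The honest way to get the explicit bound (\ref{glavno}) is via the complete quotients. Using (\ref{copml_quot}), $\xi_n=-\frac{p_{n-2}-q_{n-2}\xi}{p_{n-1}-q_{n-1}\xi}$ and similarly $\eta_k=-\frac{P_{k-2}-Q_{k-2}\eta}{P_{k-1}-Q_{k-1}\eta}$; Lemma \ref{zveza} relates $\frac{p}{q}-\xi$ to $m\frac{q}{p}-\eta$, so one can express $\eta_k$ in terms of $\xi_n$ and the ratios $p_{n-1}/q_{n-1}$, $p_{n-2}/q_{n-2}$. Concretely I would show $\eta_k=\frac{r}{s}\cdot\frac{p_{n-1}}{p_{n-2}}\cdot\frac{\text{(something close to }1/\xi_n)}{1}$, i.e. $\eta_k$ and $\frac{r}{s}\xi_n$ differ by a factor controlled by $1<\xi_n$ and by the relative-error estimates in the preliminaries. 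Taking integer parts, $B_k=\lfloor\eta_k\rfloor$ and $b_n=\lfloor\xi_n\rfloor$, and the inequality $\frac{r}{s}(b_n)-\text{(error)}<B_k<\frac{r}{s}(b_n+1)+\text{(error)}$ rearranges into $-2r+2\le rb_n-sB_k\le 2s-2$ once the error terms are bounded by the crude estimates $|\delta|<1$ and $\xi_n>1$, $\eta_k>1$.

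The main obstacle I anticipate is the stabilization step — proving that consecutive connected pairs use the \emph{same} factorization $rs=m$, or handling the bookkeeping when they do not. If $r,s$ can change from one rung of the ladder to the next, the clean recurrence substitution breaks and one must argue that a change in the factorization only happens at isolated places and still respects the inequality; I would handle this by noting $r\mid p_{n-1}$ and $r'\mid p_{n-2}$ with $p_{n-1}=b_np_{n-2}+p_{n-3}$, so any common factor structure propagates, and carefully checking the boundary rungs separately. The arithmetic of extracting $r,s$ from the coprimality of numerators and denominators is the delicate heart of the argument; the analytic estimate at the end is comparatively routine given the preliminary inequalities.
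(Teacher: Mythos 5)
Your opening step is sound and coincides with the paper's: since the two convergents are in lowest terms and $p_{n-1}P_{k-1}=m\,q_{n-1}Q_{k-1}$, one gets $Q_{k-1}\mid p_{n-1}$ and $q_{n-1}\mid P_{k-1}$, so $r=p_{n-1}/Q_{k-1}$ and $s=P_{k-1}/q_{n-1}$ are natural numbers with $rs=m$ (this is exactly (\ref{rs})). The first genuine gap is your ``stabilization'' step: it is simply not true that the previous convergents $\frac{p_{n-2}}{q_{n-2}}$ and $\frac{P_{k-2}}{Q_{k-2}}$ are again connected. The non-intersection proposition and Lemma \ref{parity} only constrain connections that already exist; they do not produce one at the preceding index, and connections are in fact sparse (the remark with Figure \ref{fig3} reports 665 connections in a ladder of length 1000). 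Moreover, even when consecutive connections do occur, Lemma \ref{rssr} shows the factorization is \emph{interchanged}, $r_{-1}=s$ and $s_{-1}=r$, which contradicts your expectation that $r,s$ persist along the ladder. So the substitutions $p_{n-2}=rQ_{k-2}$, $P_{k-2}=sq_{n-2}$ on which your recurrence argument rests are unavailable; the theorem must be proved for a single, possibly isolated connection, using the neighbouring convergents of $\xi$ and of $\eta$ separately.

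The second gap is the closing estimate. The bound (\ref{glavno}) does not follow from ``error terms bounded by crude estimates'': the crux of the paper's proof is that $r\xi_n-s\eta_k$ is \emph{exactly an integer}. One writes $r\xi_n-s\eta_k$ via (\ref{copml_quot}) over a common denominator (equation (\ref{dd})), then checks, using (\ref{pqpq}), (\ref{rs}) and Lemma \ref{parity}, that both $(rq_{n-2},rp_{n-2})$ and $(P_{k-2},mQ_{k-2})$ solve the Diophantine equation $p_{n-1}x-q_{n-1}y=(-1)^n r$; coprimality of $p_{n-1},q_{n-1}$ forces $t=\frac{P_{k-2}-rq_{n-2}}{q_{n-1}}=\frac{mQ_{k-2}-rp_{n-2}}{p_{n-1}}\in\ZZ$, and substituting back gives $r\xi_n-s\eta_k=t$ with $-r<t<s$ (from $P_{k-2}<P_{k-1}$, $Q_{k-2}<Q_{k-1}$, $p_{n-2}<p_{n-1}$), hence $-r+1\le r\xi_n-s\eta_k\le s-1$. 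Only then does taking integer parts, using integrality once more, yield $-2r+2\le rb_n-sB_k\le 2s-2$. An analytic argument that only bounds $r\xi_n-s\eta_k$ in the open interval $(-r,s)$ would, after flooring, give merely $-2r+1\le rb_n-sB_k\le 2s-1$, strictly weaker than the theorem. So the delicate heart is not the extraction of $r,s$ (one line) nor the final floor estimate, but the arithmetic identity making $r\xi_n-s\eta_k$ an integer --- an idea missing from your proposal.
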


\begin{proof}
Because convergents ${p_{n-1}\over q_{n-1}}$, ${P_{k-1}\over
Q_{k-1}}$ are reduced, it follows from Definition \ref{connected}
that
\begin{equation}\label{rs}
r:={p_{n-1}\over Q_{k-1}},\qquad s:={P_{k-1}\over q_{n-1}}
\end{equation}
are natural numbers.

Using (\ref{copml_quot}) and (\ref{rs}) we get
\begin{eqnarray*}
r\xi_n-s\eta_k&=&-r{p_{n-2}-q_{n-2}\xi \over p_{n-1}-q_{n-1}\xi
}+s{P_{k-2}-Q_{k-2}\eta \over P_{k-1}-Q_{k-1}\eta }\\
&=& {-r p_{n-2}+r q_{n-2}\xi \over r Q_{k-1}- q_{n-1}\xi}+{s
P_{k-2}-s Q_{k-2}\eta \over s q_{n-1}-Q_{k-1}\eta}
\end{eqnarray*}

To obtain the same denominator, the second fraction is extended
with $\xi$ and the factor $s$ divided out
\begin{eqnarray}\label{dd}\nonumber
r\xi_n-s\eta_k&=&{-r p_{n-2}+rq_{n-2}\xi \over r Q_{k-1}-
q_{n-1}\xi}+{P_{k-2}\xi -m Q_{k-2}\over q_{n-1}\xi -r
Q_{k-1}}\\ &=&{r(sQ_{k-2}-p_{n-2})-(P_{k-2}-rq_{n-2})\xi \over
rQ_{k-1}- q_{n-1}\xi}
\end{eqnarray}

Let us  solve the linear diophantine equation
\begin{equation}\label{diof_en}
p_{n-1}x-q_{n-1}y=(-1)^n r.
\end{equation}
From (\ref{pqpq}) it follows that
$p_{n-1} q_{n-2}-q_{n-1}p_{n-2}=(-1)^{n}$.
Hence  one of the solution of (\ref{diof_en}) is
$(x_0,y_0)=(rq_{n-2}, rp_{n-2})$.

Let us prove that
$(x_1, y_1)=(P_{k-2}, mQ_{k-2})$ is also the solution of (\ref{diof_en})
using (\ref{rs}), (\ref{pqpq}) and Lemma \ref{parity} we get
\begin{eqnarray*}
p_{n-1}x_1-q_{n-1}y_1&=& p_{n-1} P_{k-2}-q_{n-1} m Q_{k-2}\\
                     &=& r Q_{k-1} P_{k-2} - \frac{P_{k-1}}{s} r s  Q_{k-2}\\
                     &=& -r (P_{k-1} Q_{k-2} -Q_{k-1} P_{k-2})\\
                     &=& (-1)^{k-1} r\\
                     &=& (-1)^n r\\
\end{eqnarray*}
Since $p_{n-1}$ and $q_{n-1}$ are coprime, the
general solution of (\ref{diof_en}) can be written as
$$(x,y)=(x_0,y_0)+t(q_{n-1},p_{n-1}), \quad t\in \ZZ.$$
Hence
$$(P_{k-2}, mQ_{k-2})=(rq_{n-2}, rp_{n-2})+t(q_{n-1},p_{n-1})$$
and the parameter $t$ can be expressed in two forms
\begin{equation}\label{t}
t={P_{k-2}-rq_{n-2}\over q_{n-1}}={mQ_{k-2}-rp_{n-2}\over p_{n-1}} \in \ZZ.
\end{equation}

Using these two forms in the equation (\ref{dd}) it follows that
$r\xi_n-s\eta_k=t$. On the other hand from (\ref{t}) and (\ref{rs}) we can estimate
$$t={P_{k-2}-rq_{n-2}\over q_{n-1}} < {P_{k-1}-rq_{n-2}\over q_{n-1}}= {sq_{n-1}-rq_{n-2}\over q_{n-1}}<s$$
and
$$t={mQ_{k-2}-rp_{n-2}\over p_{n-1}} > {mQ_{k-2}-rp_{n-1}\over p_{n-1}}= {r s Q_{k-2}-r^2 Q_{k-1}\over r Q_{k-1}}>-r.$$
Since $t\in \ZZ$ it follows
\begin{equation}\label{neenacba}
-r+1\le r\xi_n-s\eta_k \le s-1.
\end{equation}
We decompose the complete quotients into their integral and
fractional parts:
$$\xi_n=\lfloor\xi_n\rfloor + (\xi_n),\qquad {i\over r}<(\xi_n)<{i+1\over r},\qquad 0\le i\le r-1$$
$$\eta_k=\lfloor\eta_k\rfloor + (\eta_k),\qquad {j\over s}<(\eta_k)<{j+1\over s},\qquad 0\le j\le s-1$$
If we use this decomposition in the equation (\ref{neenacba}), we
get
$$
-r+1\le r b_n +i + \epsilon_1 - s B_k -j - \epsilon_2\le s-1
$$
$$
-2r+2+\epsilon_2-\epsilon_1\le r b_n - s B_k \le
2s-2+\epsilon_2-\epsilon_1
$$
where $0<\epsilon_1, \epsilon_2<1$ and $\epsilon_2-\epsilon_1$ is
between -1 and 1. Because all other numbers are integers, we
finaly get
$$-2r+2\le rb_n-sB_k\le 2s-2.$$
\end{proof}

\begin{figure}[!htbp]
\centering
\includegraphics[width=130mm]{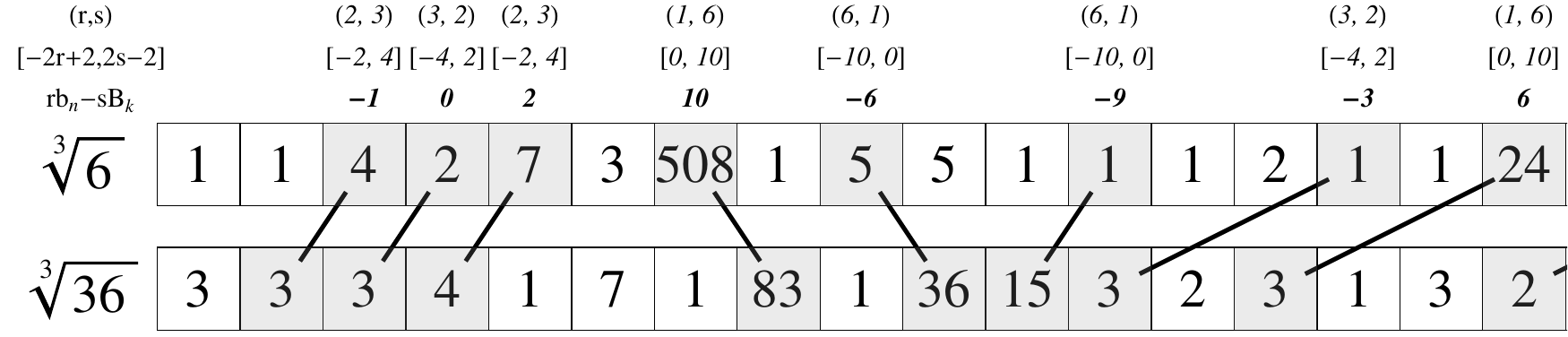}
\caption{Continued fraction ladder of $(\sqrt[3]{6},\sqrt[3]{36})$}\label{fig2}
\end{figure}

\begin{cor}
If in the above Theorem \ref{izrek} prime  $m$ is taken, then the ratio between the
connected partial quotients is roughly $m$ and the biggest one is at least $m$.
\end{cor}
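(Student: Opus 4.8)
The plan is to specialize Theorem~\ref{izrek} to the case where $m$ is prime and extract the two claimed consequences. Since $m$ is prime, the only factorizations $rs=m$ with $r,s$ natural numbers are $(r,s)=(1,m)$ and $(r,s)=(m,1)$. So for each connected pair of triplets, one of these two cases holds, and I would treat them symmetrically.

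First I would take the case $(r,s)=(1,m)$. Then (\ref{glavno}) reads $0 \le b_n - mB_k \le 2m-2$, which gives $mB_k \le b_n \le mB_k + 2m - 2$, so that $\frac{b_n}{B_k}$ lies between $m$ and $m + \frac{2m-2}{B_k}$; since $B_k \ge 1$ this ratio is between $m$ and $3m-2$, i.e.\ \emph{roughly} $m$, and it is exactly $m$ up to a bounded additive error on $b_n$. Moreover $b_n \ge mB_k \ge m$, so the partial quotient $b_n$ on the $\xi$-side is at least $m$. Symmetrically, in the case $(r,s)=(m,1)$, (\ref{glavno}) becomes $-2m+2 \le mb_n - B_k \le 0$, hence $mb_n \le B_k \le mb_n + 2m-2$, so now $B_k \ge mb_n \ge m$ and the ratio $\frac{B_k}{b_n}$ is again roughly $m$, with $B_k$ the larger of the two partial quotients. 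In both cases the larger of $b_n, B_k$ is $\ge m$, which is the assertion that ``the biggest one is at least $m$,'' and the ratio of the larger to the smaller is $m$ up to a bounded correction, which is what ``roughly $m$'' means.

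The remaining point is to phrase ``roughly $m$'' precisely enough to be a genuine corollary rather than a vague remark; I would state it as: the larger partial quotient equals $m$ times the smaller plus a remainder in $\{0,1,\dots,2m-2\}$. This follows immediately by reading off the appropriate inequality above in whichever of the two cases occurs. I do not anticipate a real obstacle here: the entire content is the observation that primality forces $\{r,s\}=\{1,m\}$, after which (\ref{glavno}) is already in the desired form; the only mild subtlety is being careful that the direction of the inequality (which of $b_n$, $B_k$ dominates) is dictated by which of the two factorizations is realized by the connection, and that in either case the conclusion is the same after swapping the roles of the two sequences.
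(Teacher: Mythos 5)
Your argument is correct and is exactly the intended deduction: the paper states this corollary without proof, as an immediate consequence of Theorem~\ref{izrek}, and your specialization (primality forces $\{r,s\}=\{1,m\}$, so (\ref{glavno}) reads $0\le b_n-mB_k\le 2m-2$ or $-2m+2\le mb_n-B_k\le 0$, whence the larger quotient is $m$ times the smaller plus a remainder in $\{0,\dots,2m-2\}$, and in particular at least $m$) is precisely that deduction. No gap to report.
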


\begin{lem}\label{rssr}
In the case of consecutive connections the role of $r$, $s$  in Theorem \ref{izrek} is interchanged
(see Figure \ref{fig2}).
\end{lem}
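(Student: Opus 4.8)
\textbf{Proof proposal for Lemma \ref{rssr}.}

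The plan is to track how the quantities $r$ and $s$ defined in (\ref{rs}) transform when we move from one connection to the next consecutive one. By Lemma \ref{parity}, consecutive connections attach to convergents of $\xi$ whose indices differ in parity, and likewise for $\eta$; so if one connection joins the triplet indexed $(n,k)$, the next one joins $(n', k')$ with $n' > n$, $k' > k$, and the parities flipped on both sides. The first step is to pin down exactly which indices $n'$, $k'$ arise: I would argue that, because ``two different connections do not intersect'' (the first Proposition) and because of the parity constraint, the consecutive connection must use the very next convergent of $\xi$ \emph{of the appropriate parity} that still participates in a connection, and similarly for $\eta$. In practice this means $\frac{p_{n-1}}{q_{n-1}}$ is replaced by $\frac{p_{n}}{q_{n}}$ and $\frac{P_{k-1}}{Q_{k-1}}$ by $\frac{P_{k}}{Q_{k}}$ (shift each index by one), so that the new connection reads $\frac{p_{n}}{q_{n}}\cdot\frac{P_{k}}{Q_{k}} = m$.

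Granting that index description, the second step is pure bookkeeping with the recurrences $p_n = b_n p_{n-1} + p_{n-2}$, $q_n = b_n q_{n-1} + q_{n-2}$ and their analogues for $P$, $Q$. The new ratios are $r' := \frac{p_{n}}{Q_{k}}$ and $s' := \frac{P_{k}}{q_{n}}$, and I want to show $r' = s$ and $s' = r$. The cleanest route is to use the connection identity $\frac{p_{n-1}}{q_{n-1}}\cdot\frac{P_{k-1}}{Q_{k-1}} = m$ together with $\frac{p_{n}}{q_{n}}\cdot\frac{P_{k}}{Q_{k}} = m$, plus the cross relations $p_{n-1} = r Q_{k-1}$, $P_{k-1} = s q_{n-1}$ from (\ref{rs}). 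From $p_n P_k = m q_n Q_k = q_n Q_k \cdot \frac{p_{n-1}P_{k-1}}{q_{n-1}Q_{k-1}}$ one gets $\frac{p_n}{q_n}\cdot\frac{P_k}{Q_k} = \frac{p_{n-1}}{q_{n-1}}\cdot\frac{P_{k-1}}{Q_{k-1}}$, which is automatic; the real input is that $\frac{p_n}{Q_k}$ and $\frac{P_k}{q_n}$ are integers (again because the convergents are in lowest terms and the product is the integer $m$), and that their product is $m = rs$. To force $\{r',s'\} = \{s,r\}$ rather than some other factorization of $m$, I would use a size/monotonicity comparison: from (\ref{glavno}) applied to the old connection, $r b_n - s B_k$ lies in $[-2r+2, 2s-2]$, and applied to the new one, $r' b_{n'} - s' B_{k'}$ lies in the analogous interval; combined with the fact (established in the ``relative errors'' bullet and in the proof of the first Proposition) that the connection errors strictly decrease, the only consistent assignment is the swap. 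Equivalently, one can compute $r'/s' = \frac{p_n q_n}{P_k Q_k}$ and compare with $r/s = \frac{p_{n-1} q_{n-1}}{P_{k-1} Q_{k-1}}$ to see the ratio inverts.

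I expect the main obstacle to be the first step — rigorously justifying that ``consecutive connection'' really does mean ``shift both indices by exactly one.'' A priori the next connection could skip several convergents on one side; ruling this out requires combining the non-intersection Proposition, the parity Lemma, and the Proposition giving a \emph{sufficient} condition ($b \ge 2m+1$, or more sharply the $|\delta| < \tfrac12$ criterion) for a scaled convergent to again be a convergent. The delicate point is that not every convergent of $\xi$ need be connected, so ``consecutive'' must be interpreted within the subsequence of connected ones; once that subsequence is shown to be interleaved one-for-one in index parity on the two sides, the algebraic swap $r \leftrightarrow s$ follows quickly from (\ref{rs}) and the recurrences. If a fully general argument proves too intricate, a reasonable fallback is to prove the statement under the hypothesis that both $b_n$ and $B_k$ at the first connection are large enough (say $\ge 2m+1$), which is exactly the regime of interest for the ladder picture in Figures \ref{fig1} and \ref{fig2}.
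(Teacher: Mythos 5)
There is a genuine gap, and it sits exactly where your proposal is vaguest: forcing the new factorization of $m$ to be the \emph{swap} of the old one. Granting (as the paper does, by hypothesis --- see also the phrasing of the next lemma, ``three consecutive connections between $b_{n-1}$, $b_n$, $b_{n+1}$ and $B_{k-1}$, $B_k$, $B_{k+1}$'') that consecutive connections mean connections at consecutive convergent indices on both sides, you correctly observe that the new quantities $r'=\frac{p_n}{Q_k}$, $s'=\frac{P_k}{q_n}$ are integers with $r's'=m$, and that all that remains is to show $r'=s$, $s'=r$ rather than some other factorization of $m$. But your two suggested routes do not deliver this. The bound (\ref{glavno}) together with the decreasing-error facts constrains nothing about which divisor pair of $m$ occurs (the partial quotients $b$, $B$ entering (\ref{glavno}) are unrestricted), and your alternative, ``compute $r'/s'=\frac{p_nq_n}{P_kQ_k}$ and compare with $r/s$ to see the ratio inverts,'' is circular: the assertion $r'/s'=s/r$ is, given $r's'=rs=m$, \emph{equivalent} to the conclusion $r'=s$, $s'=r$, and you give no argument for it. The missing idea is the determinant identity (\ref{pqpq}). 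The paper substitutes $p_{n-1}=rQ_{k-1}$, $q_{n-1}=\frac{P_{k-1}}{s}$ and the analogous expressions for the neighbouring connection into $p_{n-1}q_{n-2}-p_{n-2}q_{n-1}=(-1)^n$, obtaining $Q_{k-1}P_{k-2}-Q_{k-2}P_{k-1}=(-1)^n\frac{s s_1}{m}$; comparing with the same identity for the $\eta$-convergents and invoking Lemma \ref{parity} gives $s s_1=m$, hence $s_1=r$ and $r_1=s$. Without some such use of (\ref{pqpq}) your bookkeeping step cannot close.

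A secondary remark: most of your effort goes into justifying that ``consecutive connection'' means ``shift both indices by exactly one,'' and you concede you may only manage a weakened statement under extra hypotheses like $b\ge 2m+1$. In the paper this is not something to be proved at all --- it is the meaning of the hypothesis --- so that discussion is beside the point; and in any case your appeal to Lemma \ref{parity} there is off (that lemma relates $n$ and $k$ within a single connection, it says nothing about parities across two different connections). If you adopt the paper's reading, your proof should go straight to the algebra, with (\ref{pqpq}) and Lemma \ref{parity} as the decisive inputs.
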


\begin{proof}
From assumptions we have
\begin{eqnarray*}
{p_{n-1}\over q_{n-1}}{P_{k-1}\over Q_{k-1}} =m= r s & \qquad
r={p_{n-1}\over Q_{k-1}} & \quad s={P_{k-1}\over q_{n-1}}\\
{p_{n-2}\over q_{n-2}}{P_{k-2}\over Q_{k-2}} =m=r_1 s_1 & \quad
 r_1={p_{n-2}\over Q_{k-2}} & \quad s_1={P_{k-2}\over q_{n-2}}
\end{eqnarray*}
We want to prove that $r_1=s$ and $s_1=r$.

\noindent
From equation (\ref{pqpq})
$$p_{n-1} q_{n-2}-p_{n-2}q_{n-1} =(-1)^n$$
we get
$$\frac{r Q_{k-1} P_{k-2}}{s_1} - \frac{r_1 Q_{k-2} P_{k-1}}{s}=(-1)^n$$
$$ Q_{k-1} P_{k-2} - Q_{k-2} P_{k-1} = (-1)^n \frac{ s s_1}{m}.$$

\noindent
From equation (\ref{pqpq}) and Lemma \ref{parity} it follows $s s_1= m$.
\end{proof}

\begin{lem}
If there are three consecutive connections between  $b_{n-1}$, $b_n$, $b_{n+1}$ and  $B_{k-1}$, $B_k$, $B_{k+1}$,  then  $rb_n-sB_k=0$ for the middle one connection (see Figure \ref{fig1} and \ref{fig2}).
\end{lem}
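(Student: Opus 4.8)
The plan is to sharpen inequality~(\ref{neenacba}) for the \emph{middle} connection, using the two neighbouring connections: instead of merely bounding the integer $t$ produced in the proof of Theorem~\ref{izrek}, we show it is exactly $0$, and that the analogous integer attached to the connection one step up also vanishes. Substituting these two facts into $\xi_n=b_n+1/\xi_{n+1}$ and $\eta_k=B_k+1/\eta_{k+1}$ then pins $rb_n-sB_k$ to $0$.

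First I would fix the indices. By Lemma~\ref{parity}, ``three consecutive connections'' means that the triplets of indices $(n-1,k-1)$, $(n,k)$, $(n+1,k+1)$ are connected, so that
$$
\frac{p_{n-2}}{q_{n-2}}\,\frac{P_{k-2}}{Q_{k-2}}
=\frac{p_{n-1}}{q_{n-1}}\,\frac{P_{k-1}}{Q_{k-1}}
=\frac{p_{n}}{q_{n}}\,\frac{P_{k}}{Q_{k}}=m .
$$
For the middle connection set $r=p_{n-1}/Q_{k-1}$, $s=P_{k-1}/q_{n-1}$ as in~(\ref{rs}). Applying Lemma~\ref{rssr} to the middle connection and its predecessor $(n-1,k-1)$ gives $p_{n-2}/Q_{k-2}=s$ and $P_{k-2}/q_{n-2}=r$, and applying it to the successor $(n+1,k+1)$ together with the middle connection gives $p_{n}/Q_{k}=s$ and $P_{k}/q_{n}=r$. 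Thus along the three rungs the pair $(r,s)$ reads $(s,r)$, $(r,s)$, $(s,r)$.

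Next I would invoke the exact equality extracted in the proof of Theorem~\ref{izrek}: for the middle connection $r\xi_n-s\eta_k=t$ with $t=(P_{k-2}-r q_{n-2})/q_{n-1}$, which is $0$ since $P_{k-2}=r q_{n-2}$; hence $r\xi_n=s\eta_k$. The same identity for the connection $(n+1,k+1)$, whose pair is $(s,r)$, reads $s\xi_{n+1}-r\eta_{k+1}=(P_{k-1}-s q_{n-1})/q_{n}=0$ because $P_{k-1}=s q_{n-1}$, so $s\xi_{n+1}=r\eta_{k+1}$. Finally, using~(\ref{ksi}) in the form $\xi_n=b_n+1/\xi_{n+1}$ and $\eta_k=B_k+1/\eta_{k+1}$,
$$
0=r\xi_n-s\eta_k=(rb_n-sB_k)+\left(\frac{r}{\xi_{n+1}}-\frac{s}{\eta_{k+1}}\right)
=(rb_n-sB_k)+\frac{r\eta_{k+1}-s\xi_{n+1}}{\xi_{n+1}\eta_{k+1}},
$$
and the last fraction is $0$; since $rb_n-sB_k\in\ZZ$ it must vanish.

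The only real obstacle is the index bookkeeping: one must keep straight that moving to a consecutive connection shifts \emph{all} triplet indices by one, so that Lemma~\ref{rssr} is used once ``downward'' (to get $P_{k-2}=r q_{n-2}$, hence $t=0$ for the middle rung) and once ``upward'' (to get the vanishing of the integer on the top rung), and that what is needed is the precise equality $r\xi_n-s\eta_k=t$ from the proof of Theorem~\ref{izrek}, not the weaker two-sided bound~(\ref{neenacba}). Once the right three identities are in hand the rest is a one-line computation.
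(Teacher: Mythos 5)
Your proof is correct and follows essentially the same route as the paper: both use Lemma~\ref{rssr} on the two adjacent rungs to obtain the exact identities $r\xi_n=s\eta_k$ and $s\xi_{n+1}=r\eta_{k+1}$ (you via the vanishing of the integer $t$ from the proof of Theorem~\ref{izrek}, the paper via the vanishing numerator in~(\ref{dd}), which is the same computation), and then combine them through the recursion~(\ref{ksi}). The only cosmetic difference is your final appeal to integrality of $rb_n-sB_k$, which is unnecessary since the identity already gives it exactly.
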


\begin{proof}
From assumptions we get using notation (\ref{rs}) from Theorem \ref{izrek} for three consecutive indexes
$$
r_{-1}={p_{n-2}\over Q_{k-2}},\,s_{-1}={P_{k-2}\over q_{n-2}},\,
r={p_{n-1}\over Q_{k-1}},\, s={P_{k-1}\over q_{n-1}},\,
r_1={p_{n}\over Q_{k}},\, s_1={P_{k}\over q_{n}}.
$$
From Lemma \ref{rssr} it follows
$$ r_{-1}=s=r_1, \qquad s_{-1}=r=s_1.$$
We get
\begin{equation}\label{alfa}
r\xi_n-s\eta_k = 0
\end{equation}
using
$$
sQ_{k-2}-p_{n-2} = r_{-1} Q_{k-2}-p_{n-2} = 0,  \;
P_{k-2}-rq_{n-2} = P_{k-2}-s_{-1} q_{n-2} = 0
$$
and (\ref{dd}).
Similarly we have
\begin{equation}\label{beta}
r_1 \xi_{n+1}-s_1 \eta_{k+1} = 0.
\end{equation}
In (\ref{beta}) we use definition of complete quotients (\ref{ksi}) and get
$$
\frac{s}{\xi_n - b_n} - \frac{r}{\eta_k - B_k}=0.
$$
After multiplying with denominators and using (\ref{alfa})
we get the result.
\end{proof}

\begin{rem}
Let us take the ladder of $(\sqrt[3]{2},\sqrt[3]{4})$ with length 1000.
In Figure \ref{fig3} we can see the difference $n-k$ for positions of 665 ladder connections.

\begin{figure}[!htbp]
\centering
\includegraphics[width=80mm]{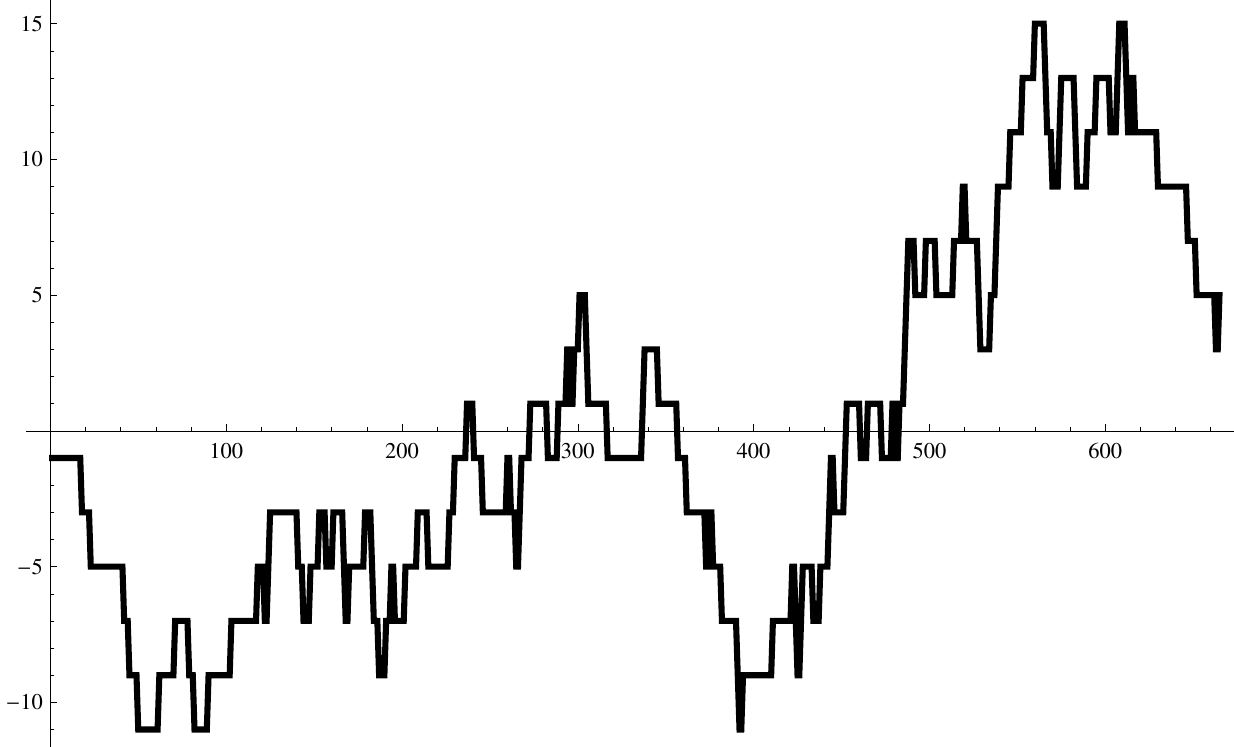}
\caption{$n-k$ for ladder of $(\sqrt[3]{2},\sqrt[3]{4})$ with  length 1000}\label{fig3}
\end{figure}
\end{rem}

\end{document}